\def\margin{2cm}
\title{Rainbow Connection Number and Radius}
\author{Manu~Basavaraju}
\author{L.~Sunil~Chandran}
\author{Deepak~Rajendraprasad\thanks{Partially supported by Microsoft Research India - PhD Fellowship.}}
\author{Arunselvan~Ramaswamy}
\affil
{
	Department of Computer Science and Automation, \authorcr 
	Indian Institute of Science, \authorcr
	Bangalore -560012, India. \authorcr
	\{manu, sunil, deepakr, arunselvan\}@csa.iisc.ernet.in
}
\date{}
\theoremstyle{definition}
\newtheorem{definition}{Definition}
\theoremstyle{plain}
\newtheorem{theorem}{Theorem}
\newtheorem{lemma}[theorem]{Lemma}
\newtheorem{corollary}[theorem]{Corollary}
\theoremstyle{remark}
\newtheorem{example}[theorem]{Example}
\def\into{\rightarrow}
\def\is{\leftarrow}
\def\iso{\zeta}
\def\A{\mathcal{A}}
\def\B{\mathcal{B}}
\def\Q{\mathcal{Q}}
\def\-{\mbox{--}}
\newcommand{\bound}[2]{\sum_{i=1}^{#1}{\min\{2i+1, #2\}}}
\newcommand{\halff}[1]{\lfloor \frac{#1}{2} \rfloor} 
\newcommand{\halfc}[1]{\lceil \frac{#1}{2} \rceil} 
\begin{document}

\maketitle

\begin{abstract}
The {\em rainbow connection number}, $rc(G)$, of a connected graph $G$ is the minimum number of colours needed to colour its edges, so that every pair of its vertices is connected by at least one path in which no two edges are coloured the same. In this note we show that for every bridgeless graph $G$ with radius $r$, $rc(G) \leq r(r+2)$. We demonstrate that this bound is the best possible for $rc(G)$ as a function of $r$, not just for bridgeless graphs, but also for graphs of any stronger connectivity. It may be noted that for a general $1$-connected graph $G$, $rc(G)$ can be arbitrarily larger than its radius ($K_{1,n}$ for instance). We further show that for every bridgeless graph $G$ with radius $r$ and chordality (size of a largest induced cycle) $k$, $rc(G) \leq rk$. Hitherto, the only reported upper bound on the rainbow connection number of bridgeless graphs is $4n/5 - 1$, where $n$ is order of the graph \cite{caro2008rainbow}. 

It is known that computing $rc(G)$ is NP-Hard \cite{chakraborty2009hardness}. Here, we present a $(r+3)$-factor approximation algorithm which runs in $O(nm)$ time and a $(d+3)$-factor approximation algorithm which runs in $O(dm)$ time to rainbow colour any connected graph $G$ on $n$ vertices, with $m$ edges, diameter $d$ and radius $r$.
\end{abstract}

\noindent {\bf Keywords:} rainbow connectivity, rainbow colouring, radius, isometric cycle, chordality, approximation algorithm.

\section{Introduction}

An {\em edge colouring} of a graph is a function from its edge set to the set of natural numbers. A path in an edge coloured graph with no two edges sharing the same colour is called a {\em rainbow path}. An edge coloured graph is said to be {\em rainbow connected} if every pair of vertices is connected by at least one rainbow path. Such a colouring is called a {\em rainbow colouring} of the graph. The minimum number of colours required to rainbow colour a connected graph is called its {\em rainbow connection number}, denoted by $rc(G)$. For example, the rainbow connection number of a complete graph is $1$, that of a path is its length, and that of a tree is its number of edges. For a basic introduction to the topic, see Chapter $11$ in \cite{chartrand2008chromatic}.

The concept of rainbow colouring was introduced in \cite{chartrand2008rainbow}. It was shown in \cite{chakraborty2009hardness} that computing the rainbow connection number of a graph is NP-Hard. To rainbow colour a graph, it is enough to ensure that every edge of some spanning tree in the graph gets a distinct colour. Hence the order of the graph minus one is an upper bound for its rainbow connection number. Many authors view rainbow connectivity as one `quantifiable' way of strengthening the connectivity property of a graph \cite{caro2008rainbow,chakraborty2009hardness,krivelevich2010rainbow}. Hence tighter upper bounds on the rainbow connection number for graphs with higher connectivity have been a subject of investigation. The following are the results in this direction reported in literature: Let $G$ be a graph of order $n$. If $G$ is 2-edge-connected (bridgeless), then $rc(G) \leq 4n/5 -1$ and if $G$ is 2-vertex-connected, then $rc(G) \leq \min\{2n/3, n/2 + O(\sqrt{n})\}$ \cite{caro2008rainbow}. This was very recently improved in \cite{li2012rainbow}, where it was shown that if $G$ is $2$-vertex-connected, then $rc(G) \leq \lceil n/2 \rceil$, which is the best possible upper bound for the case. It also improved the previous best known upper bound of $3(n+1)/5$ for $3$-vertex connected graphs \cite{li2010rain3con}. It was shown in \cite{krivelevich2010rainbow} that $rc(G) \leq 20n/\delta$ where $\delta$ is the minimum degree of $G$. The result was improved in \cite{chandran2011raindom} where it was shown that $rc(G) \leq 3n/(\delta + 1) + 3$. Hence it follows that $rc(G) \leq 3n/(\lambda + 1) + 3$ if $G$ is $\lambda$-edge-connected and $rc(G) \leq 3n/(\kappa + 1) + 3$ if $G$ is $\kappa$-vertex-connected. It was shown in \cite{li2012rainbow} that the above bound in terms of edge connectivity is tight up to additive constants and that the bound in terms of vertex connectivity can be improved to $(2 + \epsilon)n/\kappa + 23/ \epsilon^2$, for any $\epsilon > 0$.

All the above upper bounds grow with $n$. The diameter of a graph, and hence its radius, are obvious lower bounds for its rainbow connection number. Hence it is interesting to see if there is an upper bound for rainbow connection number which is a function of radius or diameter alone. Such upper bounds were shown for some special graph classes in \cite{chandran2011raindom}. But, for a general graph, the rainbow connection number cannot be upper bounded by a function of $r$ alone. For instance, the star $K_{1,n}$ has radius $1$ but rainbow connection number $n$. In fact, it is easy to see that the number of bridges in a graph is also a lower bound on its rainbow connection number. Still, the question of whether such an upper bound exists for graphs with higher connectivity remains. Here we answer this question in the affirmative. In particular, we show that if $G$ is bridgeless, then $rc(G) \leq r(r+2)$ where $r$ is the radius of $G$ (Corollary \ref{cor:rainradius}). Moreover, we also demonstrate that the bound cannot be improved even if we assume stronger connectivity (Example \ref{ex:tight}). The technique presented in this paper of growing a connected multi-step dominating set was later extended in \cite{dong2011rainbridge} to show an upper bound for the rainbow connection number of a general connected graph in terms of its radius and  number of bridges.

Since the above bound is quadratic in $r$, we tried to see what additional restriction would give an upper bound which is linear in $r$. To this end, we show that if the size of isometric cycles or induced cycles in a graph is bounded independently of $r$, then the rainbow connection number is linear in $r$. In particular, we show that if $G$ is a bridgeless graph with radius $r$ and the size of a largest isometric cycle $\zeta$, then $rc(G) \leq r\zeta$ (Theorem \ref{thm:generalbound}). Since every isometric cycle is induced, it also follows that $rc(G) \leq rk$ where $k$ is the chordality (size of a largest induced cycle) of $G$ (Corollary \ref{cor:chordal}).

Since computing $rc(G)$ is NP-Hard \cite{chakraborty2009hardness}, it is natural to ask for approximation algorithms for rainbow colouring a graph. Our proof for the $r(r+2)$ bound is constructive and hence yields a $(r+2)$-factor approximation algorithm to rainbow colour any bridgeless graph $G$ of radius $r$. Note that $r$ is a lower bound on $rc(G)$ and hence the approximation factor. We show that this algorithm runs in $O(nm)$ time, where $n$ and $m$ are the number of vertices and edges of $G$ respectively. We also present an algorithm which has a smaller running time of $O(dm)$ but with a slightly poorer approximation ratio of $(d+2)$, where $d$ is the diameter of $G$. Both these algorithms are described in Section \ref{sec:bridgelessapprox}. Bridges in a connected graph can be found in $O(m)$ time \cite{tarjan1974note}. Contracting every bridge of a general connected graph gives a bridgeless graph and its rainbow colouring can be extended to the original graph by giving a new colour to every bridge. Using these ideas, we give a $(r+3)$-factor approximation algorithm which runs in $O(nm)$ time and a $(d+3)$-factor approximation algorithm which runs in $O(dm)$ time to rainbow colour any connected graph $G$ on $n$ vertices, with $m$ edges, diameter $d$ and radius $r$ (Section \ref{sec:generalapprox}).

\subsection{Preliminaries}
\label{sec:prelims}

All the graphs considered in this article are finite, simple and undirected. The {\em length} of a path $P$ is its number of edges and is denoted by $|P|$.  An edge in a connected graph is called a {\em bridge}, if its removal disconnects the graph. A connected graph with no bridges is called a {\em bridgeless} (or {\em $2$-edge-connected}) graph. If $S$ is a subset of vertices of a graph $G$, the subgraph of $G$ induced by the vertices in $S$ is denoted by $G[S]$. The graph obtained by contracting the set $S$ into a single vertex $v_S$ is denoted by $G/S$. The vertex set and edge set of $G$ are denoted by $V(G)$ and $E(G)$ respectively.

\begin{definition}
Let $G$ be a connected graph. The {\em distance} between two vertices $u$ and $v$ in $G$, denoted by $d_G(u,v)$ is the length of a shortest path between them in $G$. The {\em eccentricity} of a vertex $v$ is $ecc(v) := \max_{x \in V(G)}{d_G(v, x)}$. The {\em diameter} of $G$ is $diam(G) := \max_{x \in V(G)}{ecc(x)}$. The {\em radius} of $G$ is $rad(G) := \min_{x \in V(G)}{ecc(x)}$. The distance between a vertex $v$ and a set $S \subseteq V(G)$ is $d_G(v, S) := \min_{x \in S}{d_G(v,x)}$. The neighbourhood of $S$ is $N(S) := \{x \in V(G) | d_G(x, S) =1\}$.
\end{definition}

\begin{definition}
\label{defn:domination}
Given a graph $G$, a set $D \subseteq V(G)$ is called a {\em $k$-step dominating set} of $G$, if every vertex in $G$ is at a distance at most $k$ from $D$. Further if $G[D]$ is connected, then $D$ is called a {\em connected $k$-step dominating set} of $G$. 
\end{definition}

\begin{definition}
A subgraph $H$ of a graph $G$ is called {\em isometric} if the distance between any pair of vertices in $H$ is the same as their distance in $G$. The size of a largest isometric cycle in $G$ is denoted by $iso(G)$. 
\end{definition}

\begin{definition}
A graph is called {\em chordal} if it contains no induced cycles of length greater than $3$. The {\em chordality} of a graph $G$ is the length of a largest induced cycle in $G$.
\end{definition}

Note that every isometric cycle is induced and hence $iso(G)$ is at most the chordality of $G$. Also note that $3 \leq iso(G) \leq 2 \cdot diam(G) + 1$ for every bridgeless graph $G$.

\section{Upper Bounds for Bridgeless Graphs}

The most important idea in this note is captured in Lemma \ref{lem:domgrow} and all the upper bounds reported here will follow easily from it. The next important idea in this note, which is used in the construction of all the tight examples, is illustrated in Theorem \ref{thm:generalbound}. Before stating Lemma \ref{lem:domgrow}, we state and prove two small lemmas which are used in its proof. 

\begin{lemma}
\label{lem:isocycle}
For every edge $e$ in a graph $G$, any shortest cycle containing $e$ is isometric. 
\end{lemma}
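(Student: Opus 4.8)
The plan is to argue by contradiction, exploiting the minimality of the cycle. Let $C$ be a shortest cycle containing $e$ and write $g = |C|$. Since $C$ is a subgraph of $G$, we always have $d_G(u,v) \le d_C(u,v)$ for $u,v \in V(C)$, so to prove that $C$ is isometric it suffices to rule out a \emph{strict} inequality. Suppose, then, that some pair $u,v \in V(C)$ satisfies $d_G(u,v) < d_C(u,v)$. The vertices $u$ and $v$ split $C$ into two arcs of lengths $a$ and $b$ with $a + b = g$ and $d_C(u,v) = \min\{a,b\}$; hence if $Q$ is a shortest $u$--$v$ path in $G$ then $|Q| = d_G(u,v) < \min\{a,b\}$, so $Q$ is strictly shorter than \emph{either} arc of $C$.

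The goal is to use $Q$ as a shortcut to build a cycle through $e$ that is shorter than $C$, contradicting minimality. The edge $e$ lies on exactly one of the two arcs, and I first produce a closed walk $W$ that traverses $e$ exactly once and has length $< g$. If $Q$ does not use $e$, I take $W$ to be the concatenation of $Q$ with the arc of $C$ that contains $e$; its length is $|Q|$ plus the length of that arc, which is less than $g$ because $|Q|$ is smaller than the length of the other arc. If instead $Q$ happens to traverse $e$, I split $Q$ at $e$ into two sub-paths and concatenate them with $e$ and with the arc of $C$ \emph{not} containing $e$; a short computation using $|Q| < \min\{a,b\}$ again gives total length $< g$, and now $e$ is used exactly once.

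It remains to extract from $W$ a genuine simple cycle through $e$ of length at most $|W| < g$. Writing $e = xy$ and deleting its single occurrence from $W$ leaves a walk from $x$ to $y$ that avoids $e$; any such walk contains a simple $x$--$y$ path $P$ whose edges are drawn from the walk, so $P$ also avoids $e$, and therefore $P$ together with $e$ is a simple cycle through $e$ of length at most $|W|$. This cycle is shorter than $C$ yet still contains $e$, the desired contradiction.

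I expect the main obstacle to be exactly this last extraction step together with the bookkeeping of how $Q$ meets $C$: a priori $Q$ may enter and leave $C$ several times, and it may even run along $e$ itself, so the argument must be organised so that the final object is a \emph{simple} cycle still containing $e$, rather than merely a closed walk or a shorter cycle that has lost the edge $e$. Isolating the intermediate ``closed walk using $e$ exactly once'' and splitting into cases according to whether $Q$ uses $e$ is what keeps this clean; the length inequalities themselves are routine once $|Q| < \min\{a,b\}$ is in hand.
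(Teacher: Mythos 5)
Your proof is correct, but it takes a different technical route from the paper's. The paper also argues by contradiction via a shortcut, but it begins with an extremal choice: among all pairs with $d_G(x,y) < d_C(x,y)$ it picks one minimizing $d_G(x,y)$, and uses that minimality to show the shortest $x$--$y$ path $P$ satisfies $P \cap C = \{x,y\}$ (any interior intersection point $z$ would yield a ``worse'' pair, contradicting minimality). With internal disjointness in hand, $P$ together with the arc of $C$ containing $e$ is immediately a simple cycle through $e$ of length less than $|C|$, and no walk-to-cycle extraction is needed. You instead take an arbitrary violating pair, accept that the shortcut $Q$ may meet $C$ arbitrarily (even traverse $e$), and compensate with two devices: a case split producing a closed walk of length $< |C|$ that uses $e$ exactly once, and the standard lemma that such a walk contains a simple cycle through $e$. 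Both arguments are sound and of comparable length; the paper's extremal choice buys disjointness and hence a directly simple cycle, while your version is more mechanical --- it needs no clever choice of pair, at the cost of invoking (and correctly justifying, including the $|P| \geq 2$ point implicit in avoiding $e$) the extraction of a simple cycle from a closed walk.
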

\begin{proof}
Let $C$ be a shortest cycle containing $e$. For contradiction, assume that there exists at least one pair $(x, y) \in V(C) \times V(C)$ such that $d_G(x,y) < d_C(x,y)$. Choose $(x, y)$ to be one with minimum $d_G(x,y)$ among all such pairs. Let $P$ be a shortest $x\-y$ path in $G$. First we show that $P \cap C = \{x,y\}$. If $P \cap C$ contains some vertex $z \notin \{x,y\}$, then $d_G(x,z) + d_G(z,y) = d_G(x,y) < d_C(x,y) \leq d_C(x,z) + d_C(z,y)$. First equality follows since $P$ is a shortest $x\-y$ path, the strict inequality follows by assumption and the last is triangle inequality. Therefore, either $d_G(x,z) < d_C(x,z)$ or $d_G(y,z) < d_C(y,z)$. This contradicts the choice of $(x,y)$. Now it is easy to see that $P$ together with the segment of $C$ between $x$ and $y$ containing $e$ will form a cycle of length strictly smaller than $C$ and containing $e$. This contradicts the minimality of $C$. Hence $C$ is isometric.
\qed \end{proof}

\begin{definition}
Given a graph $G$ and a set $D \subset V(G)$, a {\em $D$-ear} is a path $P = (x_0, x_1, \ldots, x_p)$ in $G$ such that $P \cap D = \{x_0, x_p\}$. $P$ may be a closed path, in which case $x_0 = x_p$. Further, $P$ is called an {\em acceptable} $D$-ear if either $P$ is a shortest $D$-ear containing $(x_0, x_1)$ or $P$ is a shortest $D$-ear containing $(x_{p-1}, x_p)$. 
\end{definition}

\begin{lemma}
\label{lem:acceptable}
If $P$ is an acceptable $D$-ear in a graph $G$ for some $D \subset V(G)$, then $d_G(x,D) = d_P(x,D)$ for every $x \in P$. 
\end{lemma}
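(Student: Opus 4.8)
The plan is to prove the two inequalities $d_G(x,D) \leq d_P(x,D)$ and $d_G(x,D) \geq d_P(x,D)$ for each $x \in P$ separately. The first is immediate: since $P \cap D = \{x_0, x_p\}$, the distance along $P$ from an interior vertex $x_i$ to $D$ is $d_P(x_i, D) = \min\{i, p-i\}$, and as $P$ is itself a path in $G$ we trivially have $d_G(x_i, D) \leq d_P(x_i, D)$. All the content lies in the reverse inequality, which I would establish by contradiction, exploiting the acceptability of $P$.

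By reversing $P$ if necessary, I may assume without loss of generality that $P$ is a shortest $D$-ear containing the edge $(x_0, x_1)$. Suppose some vertex $x_j$ violates the claim, that is, $t := d_G(x_j, D) < \min\{j, p-j\} = d_P(x_j, D)$. This forces $0 < j < p$, so $x_j$ is an interior vertex of $P$ and hence $x_j \notin D$. Let $Q$ be a shortest $x_j\-D$ path, of length $t$, ending at some $w \in D$, and consider the walk $W$ obtained by concatenating the initial segment $(x_0, x_1, \ldots, x_j)$ of $P$ with $Q$. Then $W$ is an $x_0\-w$ walk of length $j + t$ whose first edge is $(x_0, x_1)$.

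The key step is to convert $W$ into a genuine $D$-ear that still contains $(x_0, x_1)$ and is strictly shorter than $P$. I would first truncate $W$ at the first vertex $y$ lying in $D$ after $x_0$; since $x_1, \ldots, x_j \notin D$, this $y$ occurs on $Q$, so the truncated walk $W'$ from $x_0$ to $y$ has all interior vertices outside $D$ and still begins with the edge $(x_0, x_1)$. I would then extract a simple path $P'$ from $W'$ by deleting closed subwalks while always retaining the earliest occurrence of each vertex; this keeps both $x_0$ and its successor $x_1$, so $P'$ still contains $(x_0, x_1)$, and its interior remains disjoint from $D$. Thus $P'$ is a $D$-ear from $x_0$ to $y$ containing $(x_0, x_1)$, of length $|P'| \leq |W'| \leq j + t < j + (p-j) = |P|$, contradicting the minimality of $P$. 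The symmetric case, in which $P$ is a shortest $D$-ear containing $(x_{p-1}, x_p)$, is handled by using the terminal segment $(x_j, \ldots, x_p)$ together with $Q$ and the bound $t < j$.

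The main obstacle is the bookkeeping in this key step: one must guarantee simultaneously that the extracted path is simple, that its interior avoids $D$ so that it is genuinely a $D$-ear rather than a walk that touches $D$ internally, and that the designated edge $(x_0, x_1)$ survives the simplification. Truncating at the first $D$-vertex secures the second property, and retaining earliest occurrences secures the third; the length bound then follows directly from $t < \min\{j, p-j\}$.
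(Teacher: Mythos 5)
Your easy direction ($d_G \le d_P$), your WLOG reduction, and your handling of the generic case are fine, but there is a genuine gap: you implicitly assume that the endpoint $w$ of the shortest path $Q$ from $x_j$ to $D$ is distinct from $x_0$, and nothing forces this. When $w \neq x_0$, your argument works, because $x_0$ and $w$ are then the only vertices of $W'$ in $D$ and $x_0$ never reappears, so loop-erasure keeping earliest occurrences does preserve the first edge $(x_0,x_1)$ and yields a genuine shorter $D$-ear. But when $w = x_0$, the walk $W' = W$ is a \emph{closed} walk based at $x_0$; the entire walk is then a closed subwalk at the earliest occurrence of $x_0$, so your extraction rule erases everything, and the claim that it ``keeps both $x_0$ and its successor $x_1$'' is false. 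Moreover, this case cannot be rescued by any cleverer extraction from $W$: consider the configuration in which $Q$ leaves $x_j$, first meets the segment $(x_0,\ldots,x_j)$ at some $x_i$ with $1 \le i < j$, and then runs back along that segment to $x_0$, so that the last edge of $Q$ is $(x_1,x_0)$, i.e.\ the same edge as the first edge of $P$. In the union of the edges of $(x_0,\ldots,x_j)$ and of $Q$, the vertex $x_0$ then has degree one, so this edge set contains no cycle through $x_0$ at all; and since $x_0$ is the only vertex of $W$ lying in $D$, any $D$-ear containing $(x_0,x_1)$ built from $W$ would have to be exactly such a cycle (the paper's definition allows closed ears, and a closed ear is a cycle through one vertex of $D$). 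So in this configuration your argument produces no shorter ear and no contradiction. Such a configuration is of course ultimately impossible --- that is what the lemma asserts --- but ruling it out is precisely the work your proof leaves undone.

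This degeneracy is exactly what the paper's proof is engineered to avoid. The paper contracts $D$ to a single vertex $v_D$, so that every $D$-ear, open or closed, becomes a cycle through $v_D$, and an acceptable ear becomes a shortest cycle containing the edge $(v_D,x_1)$; it then invokes Lemma \ref{lem:isocycle}. The proof of that lemma does not splice in a shortest path from an arbitrary violating vertex: it picks a violating \emph{pair} $(x,y)$ minimizing $d_G(x,y)$, and this minimality forces the shortest $x$--$y$ path to meet the cycle only at $x$ and $y$. Internal disjointness is exactly what excludes the bad overlap above (a path internally disjoint from the cycle cannot, for instance, share its final edge with it), so the splice is guaranteed to be a genuine shorter cycle through the designated edge. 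Your proof is missing precisely this disjointness mechanism; to repair it you should either contract $D$ and quote Lemma \ref{lem:isocycle} (at which point you have the paper's proof), or transplant its minimal-violating-pair argument into your setting --- refining the walk-simplification bookkeeping alone cannot close the gap.
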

\begin{proof}
Without loss of generality, let $P = (x_0, x_1, \ldots, x_p)$ be a shortest $D$-ear containing $e = (x_0, x_1)$. Let $G' = G/D$ be the graph obtained by contracting $D$ into a single vertex $v_D$. It is easy to see that $P' = (v_D, x_1, x_2, \ldots, x_{p-1}, v_D)$ is a shortest cycle in $G'$ containing $e = (v_D, x_1)$. Hence by Lemma \ref{lem:isocycle}, $P'$ is isometric in $G'$. Now the result follows since $d_G(x,D) = d_{G'}(x,v_D)$ and $d_P(x,D) = d_{P'}(x,v_D)$. 
\qed \end{proof}

\begin{lemma}
\label{lem:domgrow}
If $G$ is a bridgeless graph, then for every connected $k$-step dominating set $D^k$ of $G$, $k \geq 1$, there exists a connected $(k-1)$-step dominating set $D^{k-1} \supset D^k$ such that 
$$rc(G[D^{k-1}]) \leq rc(G[D^k]) + \min\{2k + 1, \iso \},$$
where $\iso = iso(G)$.
\end{lemma}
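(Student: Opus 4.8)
The plan is to take a connected $k$-step dominating set $D^k$ and enlarge it, one "layer" at a time, into a connected $(k-1)$-step dominating set $D^{k-1}$ by attaching carefully chosen ears, while colouring the new edges with a bounded palette. Since $D^k$ is a connected $k$-step dominating set, every vertex at distance exactly $k$ from $D^k$ needs to be pulled to within distance $k-1$; the natural way to do this is to attach, for each such vertex (or rather for each connected piece of the "$k$-th layer"), a path back to $D^k$. The key structural tool will be the notion of an \emph{acceptable $D^k$-ear}: by Lemma \ref{lem:acceptable}, along such an ear $P$ we have $d_G(x, D^k) = d_P(x, D^k)$ for every $x \in P$, which guarantees that attaching $P$ genuinely reduces the step-domination distance of the vertices on $P$ in the expected, controlled way.

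**First** I would make precise the process of choosing which ears to add. Let $W$ be the set of vertices at distance exactly $k$ from $D^k$. Because $G$ is bridgeless, every such vertex lies on a cycle, and hence lies on some $D^k$-ear; I would select a collection of acceptable $D^k$-ears so that their union, together with $D^k$, dominates $W$ to within distance $k-1$ and keeps the whole set connected. The set $D^{k-1}$ is then $D^k$ together with the internal vertices of these ears. Connectivity is immediate since each ear has both endpoints in $D^k$ (so $G[D^{k-1}]$ stays connected), and the step-domination claim follows from Lemma \ref{lem:acceptable}: a vertex formerly at distance $k$ now sits on an attached ear at distance at most $k-1$ from $D^k \subseteq D^{k-1}$.

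**The colouring** is where the bound $\min\{2k+1,\iso\}$ must be extracted, and this is the crux. I would colour all the newly added ear-edges using a fresh palette of size $\min\{2k+1,\iso\}$, reused across all the ears, and argue that any pair of vertices in $D^{k-1}$ is joined by a rainbow path. Pairs already in $D^k$ keep their old rainbow path (using the old colours, so no conflict arises with the new palette). For a new vertex $x$ on an ear $P$, a rainbow path to $D^k$ runs along $P$; its length is at most the length of the ear. By acceptability and Lemma \ref{lem:acceptable}, each endpoint of an ear lies within distance $k$ of $D^k$ along the ear, so an ear has length at most $2k+1$, giving the $2k+1$ bound directly. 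For the $\iso$ bound, I would invoke Lemma \ref{lem:isocycle}: contracting $D^k$ turns an acceptable ear into a shortest cycle, which is therefore isometric, so its length in $G/D^k$ — and hence the number of distinct colours needed along it — is bounded by $iso(G)=\iso$. Combining a fresh-coloured ear-segment with an old rainbow path inside $G[D^k]$ yields a rainbow path for every pair, so $rc(G[D^{k-1}])$ exceeds $rc(G[D^k])$ by at most $\min\{2k+1,\iso\}$.

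**The main obstacle** I anticipate is the colour-reuse argument: I am using the \emph{same} small palette on \emph{all} the added ears simultaneously, so I must ensure that a rainbow path for an arbitrary pair never forces two edges of the same new colour. The safe route is to route every rainbow path so that it enters the new region along at most one ear, traverses a prefix of that ear (which is internally rainbow by the isometry/shortest-cycle argument), and then completes inside $G[D^k]$ using exclusively old colours; two vertices on different ears are connected by concatenating their two ear-prefixes through $D^k$, and I must check that a single consistent colouring of each ear (say, colour the $i$-th edge from the $D^k$-end of the ear with colour $i$) makes these concatenations rainbow. Verifying that this indexing is globally consistent and never repeats a colour on any single path is the delicate bookkeeping step, and it is exactly where the isometry supplied by Lemmas \ref{lem:isocycle} and \ref{lem:acceptable} does the real work.
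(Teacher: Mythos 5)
Your overall strategy --- grow $D^k$ by attaching acceptable $D^k$-ears, colour the new edges from a fresh palette of size $\min\{2k+1,\iso\}$, and route every rainbow path through $G[D^k]$ --- is the same as the paper's, but the colouring scheme you sketch fails, and it fails exactly at the step you deferred as ``delicate bookkeeping.'' If each ear is coloured $1,2,\ldots,p$ starting from ``the'' $D^k$-end, and two new vertices $x$ and $y$ on different ears are joined by concatenating their two ear-prefixes through $D^k$, then both prefixes contain colour $1$ (indeed all the low colours), so the concatenation is never rainbow. The repair is not routine: one of the two vertices must be sent the \emph{long} way around its ear, and the colouring must be designed so that the relevant prefix of one ear and suffix of the other are always colour-disjoint. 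This is precisely why the paper introduces \emph{two} pools $\A$ and $\B$ and the ``evenly coloured'' pattern $a_1,\ldots,a_{\halfc{p}},b_{\halff{p}},\ldots,b_2,b_1$ (colours counting up from \emph{both} ends of the ear), and then runs a four-case analysis according to whether $x$ and $y$ sit in the $\A$-half or the $\B$-half of their respective ears, routing one of them away from its nearer end whenever both lie in halves coloured from the same pool. (A single-pool scheme can in fact be salvaged, but only with this short-way/long-way routing, never with prefix--prefix concatenation.)

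There is a second gap: you ``select a collection of acceptable $D^k$-ears'' covering the layer around $D^k$ as if they were independent objects. In general, a shortest $D^k$-ear through a new edge $(x_0,x_1)$ runs into vertices captured by \emph{previously added} ears before it returns to $D^k$; ears therefore overlap, and an edge lying on two ears cannot be coloured consistently with both of their patterns. The paper's proof is iterative exactly for this reason: when the new path $L$ first hits an old ear $P'$ at a vertex $x_l$, it is truncated there and merged with the \emph{shorter} segment $R$ of $P'$, and Lemma \ref{lem:acceptable} guarantees both that $L\cup R$ is again an acceptable ear and that $R$ is the shorter half of it (hence coloured within a single pool), so the colours already on $R$ extend to an even colouring of $L\cup R$. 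Without this merging step the construction does not go through. A smaller issue: you bound $|P|\le\iso$ by saying a shortest cycle in $G/D^k$ is isometric in $G/D^k$, but that only compares it to $iso(G/D^k)$, which is not obviously at most $iso(G)$; the paper instead takes a shortest cycle $C$ of $G$ itself through the edge, which is isometric in $G$ by Lemma \ref{lem:isocycle}, and notes that a sub-path of $C$ is already a $D^k$-ear, giving $|P|\le|C|\le\iso$.
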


\begin{proof}
Given $D^k$, we rainbow colour $G[D^k]$ with $rc(G[D^k])$ colours. Let $m = \min\{2k + 1, \iso \}$ and let $\A = \{a_1, a_2, \ldots \}$ and $\B = \{b_1, b_2, \ldots\}$ be two pools of colours, none of which are used to colour $G[D^k]$. A $D^k$-ear $P = (x_0, x_1, \ldots, x_p)$ will be called {\em evenly coloured} if its edges are coloured $a_1, a_2, \ldots, a_{\halfc{p}}, b_{\halff{p}}, \ldots, b_2, b_1$ in that order. We prove the lemma by constructing a sequence of sets $D^k = D_0 \subset D_1 \subset \cdots \subset D_t = D^{k-1}$ such that $D_{i+1} = D_i \cup P$, where $P$ is an acceptable $D^k$-ear and then colouring $G[D_{i+1}]$ in such a way that $P$ is evenly coloured using at most $m$ colours from $\A \cup \B$. In particular, this ensures that every $x \in D_{i} \backslash D^k$, $0 \leq i \leq t$, lies in an evenly coloured acceptable $D^k$-ear throughout the construction. 

If $N(D^k) \subset D_i$, then $D_i$ is a $(k-1)$-step dominating set and we stop the procedure by setting $t = i$. Otherwise pick any edge $e = (x_0, x_1) \in D^k \times (N(D^k) \backslash D_i)$ of $G$ and let $Q = (x_0, x_1, \ldots, x_q)$ be a shortest $D_k$-ear containing $e$. Such an ear always exists since $G$ is bridgeless. Let $x_l$ be the first vertex of $Q$ in $D_i$. If $x_l = x_q$, then evenly colour $Q$. Hence $P = Q$ is an evenly coloured acceptable $D^k$-ear. Otherwise $x_l$ is on some evenly coloured acceptable $D^k$-ear $P'$ added in an earlier iteration. By Lemma \ref{lem:acceptable}, $d_{P'}(x_l, D^k) = d_G(x_l, D^k)$. Hence the shorter segment $R$ of $P'$ (from $x_l$ to $D^k$) together with $L = (x_0, x_1, \ldots, x_l)$ is also an acceptable $D^k$-ear, $P = L \cup R$ containing $e$. Colour the edges of $L$ so that $P$ is evenly coloured. This is possible because (i) $R$ uses colours exclusively from one pool ($|R| \leq \lfloor |P'|/2 \rfloor$, since it is a shorter segment of $P'$) and (ii) $R$ forms a shorter segment of $P$ ($|L| \geq d_G(x_l, D^k) = |R|$, by Lemma \ref{lem:acceptable}). Hence the colouring of $R$ can be evenly extended to $L$. Set $D_{i+1} = D_i \cup P$. 

Firstly, we claim that at most $m$ new colours are used in the above procedure for constructing $D^{k-1}$ from $D^k$. Since $D^k$ is a $k$-step dominating set and since the $D^k$-ear $P = (x_0, x_1, \ldots, x_p)$ added in each iteration is acceptable, it follows that $|P| \leq 2k + 1$. Otherwise a middle vertex $x_{\halff{p}}$ of $P$ will be at a distance more than $k$ from $D^k$ (Lemma \ref{lem:acceptable}). Let $C$ be a shortest cycle containing $e = (x_0, x_1)$. $C$ exists since $G$ is bridgeless. By Lemma \ref{lem:isocycle}, $C$ is isometric and hence $|C| \leq \iso$. Further, $|P| \leq |C|$ since a sub-path of $C$ is a $D^k$-ear containing $e$. Thus $|P| \leq m = \min\{2k+1, \iso\}$ in every iteration. Hence all the new colours used in the procedure are from $\{a_1, \ldots, a_{\halfc{m}}\} \cup \{b_1, \ldots b_{\halff{m}}\}$, i.e., at most $m$ new colours are used. 

Next, we claim that the $G[D^{k-1}]$ constructed this way is rainbow connected. Any pair $(x,y) \in D^k \times D^k$, is rainbow connected in $G[D^k]$. For any pair $(x, y) \in (D^{k-1} \backslash D^k) \times D^k$, let $P = (x_0, x_1, \ldots, x_i = x, \ldots, x_p)$ be the evenly coloured (acceptable) $D^k$-ear containing $x$. Joining $(x = x_i, x_{i+1}, \ldots, x_p)$ with a $x_p \- y$ rainbow path in $G[D^k]$ gives a $x \- y$ rainbow path. For any pair $(x, y) \in (D^{k-1} \backslash D^k) \times (D^{k-1} \backslash D^k)$, let $P = (x_0, x_1, \ldots, x_i = x, \ldots, x_p)$ and $Q = (y_0, y_1, \ldots, y_j = y, \ldots, y_q)$ be evenly coloured (acceptable) $D^k$-ears containing $x$ and $y$ respectively. Recall that the vertices of $P$ and $Q$ are ordered in such a way that their first halves get colours from Pool $\A$. We consider the following $4$ cases. If $i \leq \halff{p}$ and $j > \halff{q}$, then joining $(y = y_{j}, y_{j+1} \ldots, y_q)$ (which is $\B$-coloured) to the $y_q \- x_0$ rainbow path in $G[D^k]$ followed by $(x_0, x_1, \ldots, x_i = x)$ (which is $\A$-coloured) gives a $x \- y$ rainbow path. Case when $i > \halff{p}$ and $j \leq \halff{q}$ is similar. When $i \leq \halff{p}$ and $j \leq \halff{q}$ check if $i \leq j$. If yes, join $(y = y_j, y_{j+1}, \ldots, y_q)$ (which uses colours from $\{a_l \in \A : l \geq j+1\} \cup \B$) to the $y_q \- x_0$ rainbow path in $G[D^k]$ followed by $(x_0, x_1, \ldots, x_i = x)$ (which uses colours from $\{a_l \in \A : l \leq i\}$) to get an $x\-y$ rainbow path. If $i > j$, then do the reverse. In the final case, when $i > \halff{p}$ and $j > \halff{q}$ check if $q-j \leq p-i$. If yes, join $(y = y_j, y_{j+1}, \ldots, y_q)$ (which uses colours from $\{b_l \in \B : l \leq q-j \}$ to the $y_q \- x_0$ rainbow path in $G[D^k]$ followed by $(x_0, x_1, \ldots, x_i = x)$ (which uses colours from $\A \cup \{b_l \in \B : l \geq p-i+1\}$) to get an $x\-y$ rainbow path. If $q-j > p-i$, then do the reverse. Any edge in $G[D^{k-1}]$ left uncoloured by the procedure can be assigned any used colour to complete the rainbow colouring.
\qed \end{proof}

\begin{theorem}
\label{thm:generalbound}
For every bridgeless graph $G$, 
$$ rc(G) \leq \bound{r}{\iso} \leq r\iso,$$
where $r$ is the radius of $G$ and $\iso = iso(G)$.
 
Moreover, for every two integers $r \geq 1$, and $3 \leq \iso \leq 2r + 1$, there exists a bridgeless graph $G$ with radius $r$ and $iso(G) = \iso$ such that $rc(G) = \bound{r}{\iso}$. 
\end{theorem}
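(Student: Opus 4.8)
The statement has two independent halves, proved by entirely different means. For the upper bound the plan is to peel $G$ inward one layer at a time via Lemma~\ref{lem:domgrow}. Start from a center $c$, i.e. a vertex with $ecc(c)=r$; then $D^r:=\{c\}$ is a connected $r$-step dominating set and $G[D^r]$ is a single vertex, so $rc(G[D^r])=0$. Applying Lemma~\ref{lem:domgrow} in turn for $k=r,r-1,\dots,1$ yields a chain $D^r\subset D^{r-1}\subset\cdots\subset D^0$ of connected step-dominating sets whose last term $D^0$ is a $0$-step dominating set, hence all of $V(G)$, so $G[D^0]=G$. Telescoping the per-step increments gives
$$ rc(G)=rc(G[D^0])\le\sum_{k=1}^{r}\min\{2k+1,\iso\}=\bound{r}{\iso}\le r\iso, $$
the last inequality because every summand is at most $\iso$. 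This direction is routine: the only things to verify are that a single vertex of eccentricity $r$ is a connected $r$-step dominating set and that the telescoping is legitimate.

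The tightness half is where the real work lies, and the plan is to exhibit, for each admissible pair $(r,\iso)$, one bridgeless graph attaining the bound with equality. The guiding principle, read off from Lemma~\ref{lem:domgrow}, is that the $i$-th inward step is charged $\ell_i:=\min\{2i+1,\iso\}$ colours, so a tight graph should be \emph{forced} to spend exactly $\ell_i$ genuinely new colours at layer $i$. I would therefore build $G$ around a single central vertex $c$ of eccentricity $r$, placing at each layer $i$ a large number of isometric cycles whose length is governed by $\ell_i$, with the threshold layer $s=\lfloor(\iso-1)/2\rfloor$ separating the layers whose cycles have length $2i+1$ from those of length $\iso$; one then checks $rad(G)=r$ and $iso(G)=\iso$ directly, the largest isometric cycle having length exactly $\iso$ and none exceeding it. The decisive design choice is the \emph{multiplicity} of cycles per layer. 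Its necessity is already visible in the smallest case $r=1,\iso=3$: two triangles sharing $c$ can be rainbow coloured with $2$ colours, whereas three triangles sharing $c$ force a third, giving $rc=3=\bound{1}{3}$.

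Everything then reduces to the matching lower bound $rc(G)\ge\bound{r}{\iso}$ for the constructed $G$, and this is where I expect the difficulty to concentrate. Note first that the lower bound cannot come from a single long rainbow path: in a radius-$r$ graph $diam(G)\le 2r$, which for large $\iso$ is far below the target (for instance $r(r+2)$ when $\iso=2r+1$), so the colours must be forced collectively by many vertex pairs. I would argue on two tiers. \emph{Within a layer}, show that the parallel cycles placed at layer $i$ cannot all be rainbow-served by fewer than $\ell_i$ colours: two vertices lying in different cycles of the same layer can be joined either through a short route across the shared ``spoke'' edges, which forces those spoke-colours to be pairwise distinct exactly as in the windmill example above, or through a detour too long to remain rainbow with the colours on hand. \emph{Across layers}, identify peripheral pairs whose every sufficiently short path is forced through a prescribed succession of layer-edges, so that the colour classes used by distinct layers must be disjoint. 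Summing $\ell_i$ over $i=1,\dots,r$ then matches the upper bound. The genuinely delicate points — and the main obstacle — are (a) ruling out the clever rainbow detours that would let colours be reused within or between layers, which is precisely what dictates how many parallel cycles each layer must carry, and (b) handling even values of $\iso$ and the threshold layer, so that $iso(G)$ is pinned to exactly $\iso$.
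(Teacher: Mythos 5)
Your first half is correct and is exactly the paper's argument: take a central vertex $u$, note $D^r=\{u\}$ is a connected $r$-step dominating set with $rc(G[D^r])=0$, apply Lemma~\ref{lem:domgrow} for $k=r,r-1,\dots,1$, and telescope. The genuine gap is in the tightness half: you never prove the lower bound $rc(G)\ge\bound{r}{\iso}$ for any concrete graph; you describe a two-tier strategy and then yourself list its unresolved obstacles. Two things in that strategy are more than loose ends. First, proving that distinct layers must use \emph{disjoint} colour classes in every colouring is much stronger than what is needed, and nothing in the sketch indicates how to rule out reusing a layer-$1$ colour inside a layer-$i$ cycle; the windmill argument does not globalize, because for $i>1$ vertices on different layer-$i$ cycles are joined by many paths through the rest of the graph. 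Second, you explicitly dismiss ``a single long rainbow path'' as a possible source of the bound because $diam(G)\le 2r$. This conflates graph distance with rainbow-path length: a rainbow path between two vertices can be forced to be far longer than their distance, and that is precisely the mechanism the paper uses. By ruling it out you steer yourself away from the argument that works and toward one that, as stated, cannot be completed.

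The paper's tight example makes two structural choices opposite to yours. The building block $H_{r,\iso}$ puts the cycles \emph{in series}, not in parallel layers: consecutive vertices $x_i,x_{i-1}$ on a spoke are joined both by a chord edge $e_i$ and by a path $P_i$ of length $\min\{2i,\iso-1\}$, so the $i$-th cycle has length $\min\{2i+1,\iso\}$; and the multiplicity is taken over whole chains, gluing $m^r+1$ copies of $H_{r,\iso}$ at the hub $u$, where $m=\bound{r}{\iso}$. Given any colouring with $k=rc(G)\le m$ colours (the first half supplies $k\le m$), the colour sequence of the $r$-edge spoke $(e_1^j,\dots,e_r^j)$ takes at most $k^r\le m^r$ values, so by pigeonhole two copies $p\neq q$ have identically coloured spokes. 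Any rainbow $v^p \- v^q$ path $R$ can then contain at most one of $e_i^p,e_i^q$ for each $i$, hence must contain $P_i^j$ for some $j\in\{p,q\}$, giving $|R|\ge\sum_{i=1}^{r}(1+|P_i|)=m$; since $R$ is rainbow, $k\ge|R|\ge m$. The seriation is what makes the bound additive over layers — one forced path pays $1+|P_i|$ for every $i$ — whereas with your parallel per-layer cycles no single pair of vertices is forced to pay for more than one layer, which is why you are driven to the unproven disjointness claim. Your windmill observation is correct, but it is exactly the $r=1$ instance of this spoke pigeonhole; completing your plan for general $r$ would amount to rediscovering the chained construction and the forced-long-path count.
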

\begin{proof}
If $u$ is a central vertex of $G$, i.e., $ecc(u) = r$, then $D^r = \{u\}$ is an $r$-step dominating set in $G$ and $rc(G[D^r]) = 0$. The only $0$-step dominating set in $G$ is $V(G)$.  Hence, repeated application of Lemma \ref{lem:domgrow} gives the upper bound

To construct a tight example for a given $r \geq 1$ and $3 \leq \iso \leq 2r+1$, consider the graph $H_{r,\iso}$ in Figure \ref{fig:cyclechain}. Note that (i) $H_{r,\iso}$ is bridgeless, (ii) the size of largest isometric cycle in $H_{r,\iso}$ is $\iso$, and (iii) $ecc(u) = r$ for any $\iso \leq 2r+1$.

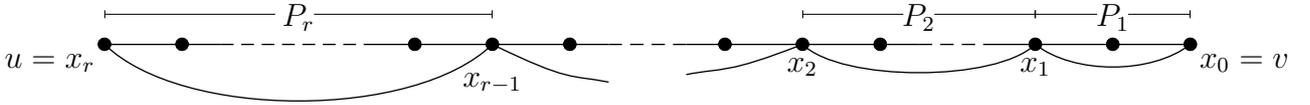
\begin{figure}[ht]
\begin{center}
\psset{xunit=0.06\textwidth}
\begin{pspicture}(-1,-0.5)(15,0.5)
	\psset{labelsep=5pt,linewidth=0.5pt}
	\psdots[dotstyle=o, dotsize=5pt,fillstyle=solid, fillcolor=black]
		(0,0)(1,0)(4,0)(5,0)(6,0)(8,0)(9,0)(10,0)(12,0)(13,0)(14,0)
	
	\psline{-}(0,0)(1.5,0)
	\psline[linestyle=dashed]{-}(1.5,0)(3.5,0)
	\psline{-}(3.5,0)(6.5,0)
	\psline[linestyle=dashed]{-}(6.5,0)(7.5,0)
	\psline{-}(7.5,0)(10.5,0)
	\psline[linestyle=dashed]{-}(10.5,0)(11.5,0)
	\psline{-}(11.5,0)(14,0)
	
	\psbezier(0,0)(1,-1)(4,-1)(5,0)(6,-0.5)(6,-0.4)(6.5,-0.5)
	\psbezier(7.5,-0.4)(8,-0.3)(8,-0.4)(9,0)(9.5,-0.5)(11.5,-0.5)(12,0)(12.5,-0.4)(13.5,-0.4)(14,0)
	
	\uput[dl](0,0){$u = x_r$}
	
	\psline[linewidth=0.2pt]{|-}(0,0.4)(2.2,0.4)
	\uput[u](2.5,0){$P_{r}$}
	\psline[linewidth=0.2pt]{-|}(2.7,0.4)(5,0.4)

	\uput[d](5,-0.2){$x_{r-1}$}
	\uput[d](9,0){$x_2$}

	\psline[linewidth=0.2pt]{|-}(9,0.4)(10.3,0.4)
	\uput[u](10.5,0){$P_2$}
	\psline[linewidth=0.2pt]{-|}(10.7,0.4)(12,0.4)
	
	\uput[d](12,0){$x_1$}
	
	\psline[linewidth=0.2pt]{|-}(12,0.4)(12.8,0.4)
	\uput[u](13,0){$P_1$}
	\psline[linewidth=0.2pt]{-|}(13.2,0.4)(14,0.4)

	\uput[dr](14,0){$x_0 = v$}

\end{pspicture}
\end{center}
\caption{Graph $H_{r, \iso}$. Every $P_i$ is a $x_{i-1}$--$x_i$ path of length $|P_i| = \min\{2i, \iso-1\}$.}
\label{fig:cyclechain}
\end{figure}

Let $m := \bound{r}{\iso}$. Construct a graph $G$ by taking $m^r + 1$ graphs $\{H^j\}_{j=0}^{m^r}$ where $V(H^j) = \{x^j : x \in V(H_{r,\iso}) \}$ and $E(H^j) = \{\{x^j, y^j\} : \{x,y\} \in E(H_{r,\iso}) \}$. Identify the vertex $u^j$ as common in every copy ($u = u^j, 0 \leq j \leq m^r$). It can be easily verified that (i) $G$ is bridgeless (ii) $rad(G) = r$ and (ii) size of the largest isometric cycle in $G$ is $\iso$. Hence, by first part of this theorem, $k := rc(G) \leq m$. In any edge colouring $c : E(G) \into \{1, 2, \ldots, k\}$ of $G$, each $r$-length $u\-v^j$ path can be coloured in at most $k^r$ different ways. By pigeonhole principle, there exist $p \neq q$, $0 \leq p, q \leq m^r$ such that $c(e_i^p) = c(e_i^q), 1 \leq i \leq r$ where $e_i^j = (x_{i-1}^j, x_i^j)$. Consider any rainbow path $R$ between $v^p$ and $v^q$. For every $1 \leq i \leq r$, $|R \cap \{e_i^p, e_i^q\}| \leq 1$ (since $c(e_i^p) = c(e_i^q)$) and hence $P_i^j \subset R$ for some $j \in \{p,q\}$. Thus $|R| \geq \sum_{i=1}^r{(1+|P_i|)} = m$. Hence $k \geq m$ and $G$ gives the required tight example. 
\qed \end{proof}

\begin{corollary}
\label{cor:rainradius}
For every bridgeless graph $G$ with radius $r$, 
$$ rc(G) \leq r(r+2).$$
Moreover, for every integer $r \geq 1$, there exists a bridgeless graph with radius $r$ and $rc(G) = r(r+2)$. 
\end{corollary}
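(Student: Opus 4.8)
The plan is to derive Corollary~\ref{cor:rainradius} directly from Theorem~\ref{thm:generalbound} by specializing the parameter $\iso$. The corollary has two parts: the upper bound $rc(G) \leq r(r+2)$ for every bridgeless graph of radius $r$, and the tightness statement asserting that $r(r+2)$ is achieved for every $r \geq 1$. Both should follow with almost no new work, since Theorem~\ref{thm:generalbound} already provides the sharper bound $rc(G) \leq \bound{r}{\iso}$ together with a matching tight example $H_{r,\iso}$ for every admissible $\iso$.

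For the upper bound, I would first recall the universal inequality $3 \leq \iso \leq 2\cdot diam(G)+1$ noted in the preliminaries, which for a graph of radius $r$ gives $\iso \leq 2r+1$ (since $diam(G) \leq 2r$, or more directly since any isometric cycle through a vertex at eccentricity-governed distance cannot exceed $2r+1$). The key observation is that when $\iso \geq 2r+1$, the term $\min\{2i+1,\iso\}$ in the sum $\bound{r}{\iso}$ equals $2i+1$ for every $i$ from $1$ to $r$, because $2i+1 \leq 2r+1 \leq \iso$. Hence the bound from Theorem~\ref{thm:generalbound} becomes $\sum_{i=1}^{r}(2i+1) = 2\cdot\frac{r(r+1)}{2} + r = r(r+1)+r = r(r+2)$. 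Since $\bound{r}{\iso}$ is monotone nondecreasing in $\iso$ and capped at this value once $\iso \geq 2r+1$, we conclude $rc(G) \leq \bound{r}{\iso} \leq r(r+2)$ regardless of the actual value of $\iso$ for $G$.

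For the tightness, I would simply invoke the second part of Theorem~\ref{thm:generalbound} with the extremal choice $\iso = 2r+1$, which is permitted since the theorem allows any $\iso$ with $3 \leq \iso \leq 2r+1$. This yields a bridgeless graph $G$ with radius $r$, $iso(G) = 2r+1$, and $rc(G) = \bound{r}{2r+1}$. By the same summation computed above, $\bound{r}{2r+1} = \sum_{i=1}^{r}(2i+1) = r(r+2)$, so this graph meets the bound exactly.

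I do not anticipate any genuine obstacle here, as the corollary is essentially a one-line instantiation of the more general theorem; the only thing requiring minor care is the arithmetic identity $\sum_{i=1}^{r}(2i+1) = r(r+2)$ and verifying that the cap $\min\{2i+1,\iso\}$ indeed saturates to $2i+1$ for all relevant $i$ under the constraint $\iso \leq 2r+1$. The substantive content—both the growing-dominating-set construction behind the upper bound and the chain-of-cycles construction $H_{r,\iso}$ behind the lower bound—has already been established in the proof of Theorem~\ref{thm:generalbound}, so the corollary merely harvests the worst case over $\iso$.
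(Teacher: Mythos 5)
Your proposal is correct and takes essentially the same route as the paper: the upper bound follows by noting that each term $\min\{2i+1,\iso\} \leq 2i+1$ in the bound of Theorem~\ref{thm:generalbound}, so that $\bound{r}{\iso} \leq \sum_{i=1}^{r}(2i+1) = r(r+2)$, and tightness follows by taking $\iso = 2r+1$ in the theorem's tight examples. One minor caveat: your side remark that every bridgeless graph of radius $r$ satisfies $\iso \leq 2r+1$ is false (the wheel obtained by adding a vertex adjacent to all vertices of a $4$-cycle has radius $1$ but an isometric $4$-cycle; the correct general bound is $\iso \leq 2 \cdot diam(G) + 1 \leq 4r+1$), but your final argument never relies on this claim---only on the termwise inequality, which holds for every value of $\iso$---so the proof stands as written.
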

\begin{proof}
Noting that $\min\{2i+1, \iso\} \leq 2i+1$, the upper bound follows from Theorem \ref{thm:generalbound}. The tight examples are obtained by setting $\iso = 2r+1$ in the tight examples for Theorem \ref{thm:generalbound}
\qed \end{proof}

A natural question at this stage is whether the upper bound of $r(r+2)$ can be improved if we assume a stronger connectivity for $G$. But the following example shows that it is not the case.

\begin{example}
[{\em Construction of a $\kappa$-connected graph of radius $r$ whose rainbow connection number is $r(r+2)$ for any two given integers $\kappa, r \geq 1$}]
\label{ex:tight}

Let $s(0) := 0$, $s(i) := 2\sum_{j=r}^{r-i+1}j$ for $1 \leq i \leq r$ and $t := s(r) = r(r+1)$. Let $V = V_0 \uplus V_1 \uplus \cdots \uplus V_t$ where $V_i = \{x_{i,0}, x_{i,1}, \ldots, x_{i,\kappa-1}\}$ for $0 \leq i \leq t-1$ and $V_t = \{x_{t,0}\}$. Construct a graph $X_{r,\kappa}$ on $V$ by adding the following edges. 
$E(X) = \{ \{x_{i,j}, x_{i',j'}\} : |i-i'| \leq 1\} \cup \{ \{x_{s(i),0}, x_{s(i+1),0} \} : 0 \leq i \leq r-1\}.$
Figure \ref{fig:cyclechain2} depicts $X_{3,2}$.

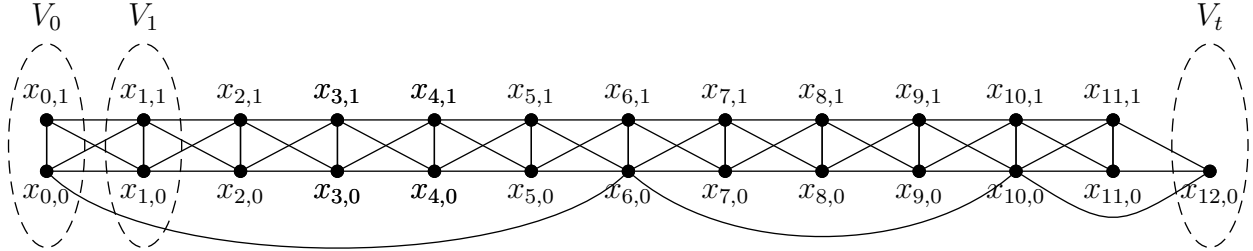
\begin{figure}[ht]
\begin{center}
\psset{xunit=0.075\textwidth, yunit=0.04\textwidth}
\begin{pspicture}(0,-1)(12,3)
	\psset{labelsep=5pt,linewidth=0.5pt}
	\psdots[dotstyle=o, dotsize=5pt,fillstyle=solid, fillcolor=black]
		(0,0)(1,0)(2,0)(3,0)(4,0)(5,0)(6,0)(7,0)(8,0)(9,0)(10,0)(11,0)(12,0)
		(0,1)(1,1)(2,1)(3,1)(4,1)(5,1)(6,1)(7,1)(8,1)(9,1)(10,1)(11,1)

	\psline{-}(0,0)(0,1)(1,0)(1,1)(2,0)(2,1)(3,0)(3,1)(4,0)(4,1)(5,0)(5,1)(6,0)(6,1)(7,0)(7,1)(8,0)(8,1)(9,0)(9,1)(10,0)(10,1)(11,0)(11,1)
	\psline{-}(0,1)(0,0)(1,1)(1,0)(2,1)(2,0)(3,1)(3,0)(4,1)(4,0)(5,1)(5,0)(6,1)(6,0)(7,1)(7,0)(8,1)(8,0)(9,1)(9,0)(10,1)(10,0)(11,1)(11,0)
	\psline{-}(0,0)(12,0)(11,1)(0,1)
	\psbezier(0,0)(1,-2)(5,-2)(6,0)(7,-1.7)(9,-1.7)(10,0)(11,-1.2)(11,-1.2)(12,0)
	
	\psellipse[linestyle=dashed](0,0.5)(0.4,2)
	\uput[u](0,2.5){$V_0$}
	\psellipse[linestyle=dashed](1,0.5)(0.4,2)
	\uput[u](1,2.5){$V_1$}
	\psellipse[linestyle=dashed](12,0.5)(0.4,2)
	\uput[u](12,2.5){$V_t$}

	\uput[d](0,0){$x_{0,0}$}
	\uput[d](1,0){$x_{1,0}$}
	\uput[d](2,0){$x_{2,0}$}
	\uput[d](3,0){$x_{3,0}$}
	\uput[d](3,0){$x_{3,0}$}
	\uput[d](4,0){$x_{4,0}$}
	\uput[d](4,0){$x_{4,0}$}
	\uput[d](5,0){$x_{5,0}$}
	\uput[d](6,0){$x_{6,0}$}
	\uput[d](7,0){$x_{7,0}$}
	\uput[d](8,0){$x_{8,0}$}
	\uput[d](9,0){$x_{9,0}$}
	\uput[d](10,0){$x_{10,0}$}
	\uput[d](11,0){$x_{11,0}$}
	\uput[d](12,0){$x_{12,0}$}

	\uput[u](0,1){$x_{0,1}$}
	\uput[u](1,1){$x_{1,1}$}
	\uput[u](2,1){$x_{2,1}$}
	\uput[u](3,1){$x_{3,1}$}
	\uput[u](3,1){$x_{3,1}$}
	\uput[u](4,1){$x_{4,1}$}
	\uput[u](4,1){$x_{4,1}$}
	\uput[u](5,1){$x_{5,1}$}
	\uput[u](6,1){$x_{6,1}$}
	\uput[u](7,1){$x_{7,1}$}
	\uput[u](8,1){$x_{8,1}$}
	\uput[u](9,1){$x_{9,1}$}
	\uput[u](10,1){$x_{10,1}$}
	\uput[u](11,1){$x_{11,1}$}

\end{pspicture}
\end{center}
\caption{Graph $X_{3,2}$. Note: (i) $X_{3,2}$ is $2$-connected and (ii) $ecc(x_{0,0}) = 3$.} 
\label{fig:cyclechain2}
\end{figure}

Let $m = r(r+2)$. Construct a new graph $G$ by taking $m^r + 1$ copies of $X_{r,k}$ and identifying the vertices in $V_0$ as common in every copy. It is easily seen that $G$ is $\kappa$-connected and has a radius $r$ with $x_{0,0}$ as the central vertex. By arguments similar to those in the tight examples for Theorem \ref{thm:generalbound}, we can see that $rc(G) = m$.
\end{example}

\begin{corollary}
\label{cor:chordal}
For every bridgeless graph $G$ with radius $r$ and chordality $k$, 
$$ rc(G) \leq \bound{r}{k} \leq rk.$$
Moreover, for every two integers $r \geq 1$ and $3 \leq k \leq 2r+1$, there exists a bridgeless graph $G$ with radius $r$ and chordality $k$ such that $rc(G) = \bound{r}{k}$. 
\end{corollary}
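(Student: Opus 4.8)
The plan is to obtain both halves of the corollary directly from Theorem~\ref{thm:generalbound}, using only the preliminary observation that every isometric cycle is induced, so that $\iso = iso(G) \le k$ whenever $k$ is the chordality of $G$. Nothing genuinely new is needed for the upper bound, and the only substantive work lies in the chordality computation for the extremal examples.

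For the upper bound I would argue purely formally. Since $\min\{2i+1,\cdot\}$ is non-decreasing in its second argument and $\iso \le k$, we have $\min\{2i+1,\iso\} \le \min\{2i+1,k\}$ for every $i$; summing over $1 \le i \le r$ and feeding in the bound of Theorem~\ref{thm:generalbound} gives
$$ rc(G) \le \bound{r}{\iso} \le \bound{r}{k} \le rk, $$
where the last step uses $\min\{2i+1,k\} \le k$. This disposes of the first part without any combinatorics.

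For the tightness I would not construct a fresh graph but specialise the extremal construction of Theorem~\ref{thm:generalbound} to $\iso = k$. Given $r \ge 1$ and $3 \le k \le 2r+1$, that theorem already supplies a bridgeless $G$ of radius $r$ with $iso(G) = k$ and $rc(G) = \bound{r}{k}$; the only outstanding point is that this same $G$ has chordality \emph{exactly} $k$. One inequality is free: since $iso(G)=k$ and isometric cycles are induced, the chordality is at least $k$. The content is the reverse inequality, namely that $G$ contains no induced cycle of length greater than $k$.

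To establish this I would analyse the block structure of the gadget $H_{r,k}$. For each $i$ the vertices $x_{i-1}$ and $x_i$ are joined both by the path $P_i$ of length $\min\{2i,k-1\}$ and by a single edge, and these together form a chordless cycle $C_i$ of length $\min\{2i,k-1\}+1 = \min\{2i+1,k\}$; consecutive cycles meet only in the common vertex $x_i$. Thus $H_{r,k}$ is a chain of cycles (a cactus), whose only cycles are the blocks $C_1,\dots,C_r$, the longest being $C_r$ of length $\min\{2r+1,k\}=k$ (here $k \le 2r+1$ is used). Finally, since the copies $\{H^j\}$ are identified only at the single vertex $u$, any simple cycle meeting two copies would have to pass through $u$ twice, which is impossible, so every cycle of $G$ lies inside one copy and the chordality of $G$ equals that of $H_{r,k}$, namely $k$. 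The main obstacle is precisely this last verification — ruling out long induced cycles that might be spliced together from arcs of several links $C_i$ or from several copies — and the cut-vertex argument together with the cactus (block) structure is what closes it.
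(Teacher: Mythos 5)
Your proposal is correct and takes essentially the same route as the paper: the upper bound follows from $iso(G) \le k$ and monotonicity of the summands $\min\{2i+1,\cdot\}$, and tightness is obtained by reusing the extremal construction of Theorem~\ref{thm:generalbound} with $\iso = k$. Your cactus/cut-vertex verification that this example has chordality exactly $k$ (not merely at least $k$) is a detail the paper asserts without proof (``suffices here too''), so your write-up is, if anything, slightly more complete.
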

\begin{proof}
Since every isometric cycle is an induced cycle, the chordality of a graph is at least the size of its largest isometric cycle. i.e, $k \geq \iso$.  Hence the upper bound follows from that in Theorem \ref{thm:generalbound}. The tight example demonstrated in Theorem \ref{thm:generalbound} suffices here too.
\qed \end{proof}

This generalises a result from \cite{chandran2011raindom} that the rainbow connection number of any bridgeless chordal graph is at most three times its radius.

\section{Approximation Algorithms}

\subsection{Bridgeless Graphs}
\label{sec:bridgelessapprox}

Throughout this section, $G$ will be a bridgeless graph with $n$ vertices, $m$ edges, diameter $d$ and radius $r$. A set $S \subset V(G)$ will be called {\em rainbow coloured} under a partial edge colouring of $G$ if every pair of vertices in $S$ is connected by a rainbow path in $G[S]$.  

\subsubsection{$O(nm)$ time $(r+2)$-factor Approximation Algorithm} 
\label{sec:radiusapprox}

Corollary \ref{cor:rainradius} was proved by demonstrating a colouring procedure which assigns a rainbow colouring to any bridgeless graph of radius $r$ using at most $r(r+2)$ colours. Since the proof is constructive, it automatically gives us an algorithm for rainbow colouring $G$. Since $r$ is a lower bound on rainbow connection number, this is a $(r+2)$-factor approximation algorithm. The procedure starts by identifying a central vertex in the graph. This can be done by computing the eccentricity of every vertex using a Breadth First Search (BFS) rooted at it. Thus the time complexity for finding the central vertex in any connected graph is $O(nm)$. The acceptable ears to be coloured in each step can be found using a BFS rooted at the selected vertex in $N(D^k)$ on a subgraph of $G$ and hence takes $O(m)$ running time on any connected graph. Since we do not start the BFS more than once from any vertex, the total running time for finding all the acceptable ears that gets coloured is $O(nm)$. The colouring of a selected acceptable ear takes a time proportional to the number of uncoloured edges in that ear. Moreover, each edge is coloured only once by the algorithm. Hence the total effect of colour assignments on the algorithm's running time is $O(m)$.  Thus the total running time for the algorithm is $O(nm)$.

Next we present an algorithm which has a smaller running time of $O(dm)$ but a slightly poorer approximation ratio of $(d+2)$.

\subsubsection{$O(dm)$ time $(d+2)$-factor Approximation Algorithm} 
\label{sec:diameterapprox}

To the best of our knowledge, there is no known algorithm to find a central vertex of a bridgeless graph in a time significantly smaller than $\Theta(nm)$. Hence we start the procedure by picking any arbitrary vertex $v$ of $G$ ($O(1)$ time). Since $ecc(v) \leq d$, this is connected $d$-step dominating set of $G$. Hence, by repeated application of Lemma \ref{lem:domgrow}, we can grow the trivially rainbow coloured connected $d$-step dominating set $D^d = \{v\}$ to a rainbow coloured connected $0$-step dominating set $D^0 = V(G)$ using  at most $d(d+2)$ colours. So if we can grow a rainbow coloured connected $k$-step dominating set $D^k$ to a rainbow coloured connected $(k-1)$-step dominating set $D^{k-1}$ in $O(m)$ time, then we can complete the rainbow colouring of $G$ using $d(d+2)$ colours in $O(dm)$ time. Since $d$ is a lower bound on rainbow connection number this gives a $(d+2)$-factor approximation algorithm.

In the proof of Lemma \ref{lem:domgrow}, given a rainbow coloured connected $k$-step dominating set $D^k$, we pick any edge $e = (x_0,x_1)$ with $x_0 \in D^k$ and $x_1$ being an uncaptured vertex in $N(D^k)$. Next, we find an acceptable ear containing $e$ and evenly colour that ear. When every vertex in $N(D^k)$ is captured this way, we have a rainbow coloured connected $(k-1)$-step dominating set $D^{k-1}$ in hand. It is easy to see that, once an acceptable ear is found and the colours (if any) of its end edges are known, it can be evenly coloured in a time proportional to number of uncoloured edges in that ear. Since no edge is coloured more than once by the algorithm, the total running time for the colouring subroutine (once the acceptable ears are found) is only $O(m)$. Hence if we can capture every vertex in $N(D^k)$ using acceptable ears in $O(m)$ time, we can construct the required $D^{k-1}$ from the given $D^k$ in $O(m)$ time. This is precisely what Algorithm \ref{alg:domgrow} achieves. 

Algorithm \ref{alg:domgrow} accepts a partially edge coloured bridgeless graph $G$, a rainbow coloured connected $k$-step dominating set $D^k$ in $G$ and two pools of colours $\A = \{a_1, a_2, \ldots, a_{k+1}\}$ and $\B = \{b_1, b_2, \ldots, b_k \}$ not used in colouring $G[D^k]$. It returns a $(k-1)$-step dominating set $D^{k-1}$ of vertices and colours a subset of $E(G[D^{k-1}]) \setminus E(G[D^k])$ using colours from $\A \cup \B$ such that $G[D^{k-1}]$ is rainbow coloured. It achieves the same by running a single BFS on $G \setminus E(G[D^k])$ with the BFS queue initialised with $D^k$ and maintaining enough side information to detect meetings which result in acceptable ears. Once an acceptable ear is found, that ear is evenly coloured using colours from pools $\A$ and $\B$. The procedure terminates once every edge is examined and hence runs in $O(m)$ time.

\subsubsection*{Side information associated with each vertex $v$ in Algorithm \ref{alg:domgrow}}

\begin{description}

\item[$Parent$:] For each vertex $v$ visited by the BFS, $Parent(v)$ points to parent vertex of $v$ in the BFS forest. It is initialised to $\emptyset$ for all vertices.

\item[$ParentEdgeColour$:] For each new vertex $v$ captured by the algorithm ($v \in D^{k-1} \setminus D^k$), $ParentEdgeColour(v)$ holds the colour assigned to the edge $(v, Parent(v))$ by the algorithm. It is also initialised to $\emptyset$ for all vertices. This information is updated for the vertices of an acceptable ear when it gets evenly coloured during the algorithm. Note that it is only a temporary and partial information of the colourings effected in one run of the algorithm which is used to make an instant check of whether a vertex has been already captured by an evenly coloured acceptable ear and to detect the colour pool used. The colouring subroutine also encodes every colour assignment into the adjacency list of $G$ and that is what is finally returned. 

\item[$Foot$:] For each vertex $v$ visited by the BFS, $Foot(v)$ is the ordered pair of last two vertices in the BFS path from $v$ to $D^k$. It is set to $\emptyset$ for all vertices in the initial queue $D^k$.

\end{description}

\begin{algorithm}
\caption{Construct and rainbow colour $D^{k-1}$ given rainbow coloured $D^{k}$}
\label{alg:domgrow}
\begin{algorithmic}
\REQUIRE 
$G$ is a partially edge coloured bridgeless graph. $D^k$ is a rainbow coloured connected $k$-step dominating set in $G$. $\A = \{a_1, a_2, \ldots, a_{k+1}\}$ and $\B = \{b_1, b_2, \ldots, b_k \}$ are two pools of colours not used to colour $G[D^k]$. 
\ENSURE 
$D^{k-1} \supset D^k$ is a rainbow coloured connected $(k-1)$-step dominating set in $G$ such that new colours used are from $\A \cup \B$.
\FOR{each $u \in G$}
	\STATE $Parent(u) \is \emptyset$, $ParentEdgeColour(u) \is \emptyset$, $Foot(u) \is \emptyset$
\ENDFOR
\STATE Flush($\Q$), Enqueue($\Q$, $D^k$), $D^{k-1} \is D^k$
\REPEAT
	\STATE $u \is Dequeue(\Q)$
	\FOR{each vertex $v \in N(u) \cap V(G \setminus D^{k})$}
		\IF[$v$ is an unvisited vertex]{$Foot(v) = \emptyset$}
			\IF[$u \in D^k$]{$Foot(u) = \emptyset$}
				\STATE $Foot(v) \is (v,u)$
			\ELSE
				\STATE $Foot(v) \is Foot(u)$
			\ENDIF
			\STATE $Parent(v) \is u$, $Enqueue(\Q, v)$
		\ELSIF[we have found an acceptable $D^k$-ear]{$Foot(v) \neq Foot(u)$}
			\IF[$u \in D^k$]{$Foot(u) = \emptyset$}
				\STATE $u_0 = u$, $c_u = \emptyset$ 
			\ELSE[$u_0$ will hold the vertex of $Foot(u)$ in $D^k$ and $c_u$ will hold the colour of $Foot(u)$]
				\STATE $(u_1, u_0) \is Foot(u)$, $c_u \is ParentEdgeColour(u_1)$
			\ENDIF
			\STATE $(v_1, v_0) \is Foot(v)$, $c_v \is ParentEdgeColour(v_1)$
			\IF[$u_1$ or $v_1$ is an uncaptured vertex in $N(D^k)$]{$c_u = \emptyset$ \OR $c_v = \emptyset$}
				\STATE $P \is u_0 T u\-v Tv_0$ where $x T y$ is the unique path from $x$ to $y$ in the BFS forest under construction. \COMMENT{Path $P$ is an acceptable $D^k$ ear some of whose edges are still uncoloured}
				\STATE $p \is |P|$ \COMMENT{length of $P$} 
				\IF{$c_u = a_1$ \OR $c_v = b_1$ \OR $c_u = c_v = \emptyset$}
					\STATE The uncoloured edges of $P$ are coloured so that the edges of $P$ get the colours $a_1, a_2, \ldots, a_{\halfc{p}}, b_{\halff{p}}, \ldots, b_2, b_1$ in that order. 
				\ELSE
					\STATE The uncoloured edges of $P$ are coloured so that the edges of $P$ get the colours $b_1, b_2, \ldots, b_{\halff{p}}, a_{\halfc{p}}, \ldots, a_2, a_1$ in that order. 
				\ENDIF 
				\STATE $D^{k-1} \is D^{k-1} \cup P$
			\ENDIF 	
		\ENDIF
\ENDFOR
\UNTIL{$\Q$ is empty}

\end{algorithmic}
\end{algorithm}

\subsection{General Connected Graphs}
\label{sec:generalapprox}

In this section, $G$ will be a connected graph with $n$ vertices, $m$ edges, diameter $d$, radius $r$ and $b$ bridges. Let $G'$ be the graph obtained by contracting every bridge of $G$. The diameter (radius) of $G'$ is at most $d$ ($r$). We can extend a rainbow colouring of $G'$ to $G$ by giving a new colour to every bridge of $G$. Hence $rc(G) \leq rc(G') + b$. We can find all the bridges in a connected graph in $O(m)$ time \cite{tarjan1974note}. Now, using the algorithm in Section \ref{sec:radiusapprox} to colour $G'$, we can colour $G$ using at most $r(r+2) + b$ colours in $O(nm)$ time. Since $r(r+2) + b \leq \max\{r, b\}(r+3)$ and since $\max\{r,b\}$ is a lower bound on $rc(G)$, we immediately have a $(r+3)$-factor $O(nm)$ approximation algorithm to rainbow colour any connected graph.

Similarly by combining an $O(m)$ algorithm to find every bridge of $G$ with the algorithm in Section \ref{sec:diameterapprox} gives an $O(dm)$ algorithm to rainbow colour $G$ using $d(d+2) + b$ colours. Since  $d(d+2) + b \leq \max\{d, b\}(d+3)$ and since $\max\{d,b\}$ is a lower bound on $rc(G)$, we immediately have a $(d+3)$-factor $O(dm)$ approximation algorithm to rainbow colour any connected graph.

\bibliographystyle{plain}


\end{document}